\documentclass[11pt,reqno]{amsart}

\usepackage{amsmath,amssymb,amsthm}
\usepackage[margin=1.15in]{geometry}
\usepackage[colorlinks=true,linkcolor=blue,citecolor=blue,urlcolor=blue]{hyperref}

\theoremstyle{plain}
\newtheorem{theorem}{Theorem}[section]
\newtheorem{corollary}[theorem]{Corollary}
\newtheorem{lemma}[theorem]{Lemma}
\newtheorem{proposition}[theorem]{Proposition}

\newtheorem{maintheorem}{Theorem}

\newtheorem{mainconj}[maintheorem]{Conjecture}

\theoremstyle{definition}
\newtheorem{definition}[theorem]{Definition}
\newtheorem{example}[theorem]{Example}

\theoremstyle{remark}
\newtheorem{remark}[theorem]{Remark}

\numberwithin{equation}{section}

\newcommand{\R}{\mathbb{R}}
\newcommand{\Hyp}{\mathbb{H}}
\newcommand{\Gt}{\widetilde{\Gamma}}
\newcommand{\Mt}{\widetilde{M}}
\newcommand{\htil}{\widetilde{h}}
\newcommand{\cV}{\mathcal{V}}
\DeclareMathOperator{\length}{length}
\DeclareMathOperator{\area}{Area}
\DeclareMathOperator{\vol}{Vol}
\DeclareMathOperator{\Ent}{Ent}
\DeclareMathOperator{\sys}{sys}
\DeclareMathOperator{\Br}{Br}

\title[Ball growth at all scales in universal covers of graphs]
{Ball growth at all scales in universal covers \\ of short metric graphs}

\author{Steve Karam}
\address{College of Engineering and Technology, American University of the Middle East, Egaila 54200, Kuwait}
\email{steve.karam@aum.edu.kw}

\subjclass[2020]{Primary 53C23; Secondary 05C12, 20F65, 37B40}
\keywords{Metric graphs, universal cover, growth of balls, volume entropy, trivalent tree,
systolic geometry, Guth's question}

\begin{document}

\begin{abstract}
Let $\Gamma$ be a connected metric graph of first Betti number $b\ge 2$, and let
$\Gamma_b$ be a trivalent graph of the same Betti number with all edges of unit length.
We show that if $\length(\Gamma)\le\lambda\,\length(\Gamma_b)$, then the universal cover
of $\Gamma$ contains a vertex around which, \emph{at every radius} $R\ge 0$, the ball is at
least as long as the ball of radius $R$ in the trivalent tree whose edges all have length
$3\lambda$. In particular, for $\lambda\le\frac13$, this ball is at least as long as the
ball of the same radius in the universal cover of $\Gamma_b$, with no multiplicative loss.
This improves an earlier theorem of the author, which gives the weaker lower bound
$(1-3\lambda)$ times the ball in the universal cover of $\Gamma_b$, and only for
$\lambda\le\frac16$. The proof rests on an elementary comparison principle. Among trees whose
vertices have degree at least three and whose edges have length at most $c$, the trivalent
tree with all edges of length $c$ has the smallest balls at every radius.
As an application, we double the exponential growth rate in the author's earlier uniform
lower bound for the area of balls in the universal cover of small-area surfaces of hyperbolic
type, from $\ln 2$ to $2\ln 2$, beyond the hyperbolic rate. We also obtain explicit constants
in the positive answer to Guth's question for surfaces.
Finally, we conjecture the sharp, all-scale version of the minimal volume entropy inequality
of Kapovich--Nagnibeda and Lim, and show that it would be optimal.
\end{abstract}

\maketitle

\noindent\textbf{Note added in version 2 (November 2026).}
This preprint is superseded by \href{https://arxiv.org/abs/2611.03277}{arXiv:2611.03277}, joint with S.~Sabourau.
It will not be submitted for publication.

\section{Introduction}\label{sec:intro}

\subsection{Background}\label{subsec:background}
Let $(M,h)$ be a closed Riemannian manifold, let $(\Mt,\htil)$ be its universal cover with
the lifted metric, and denote by $B_{\htil}(\tilde x,R)$ the closed ball of radius $R$ centred at
$\tilde x\in\Mt$. The \emph{volume entropy}
\[
\Ent(M,h)=\lim_{R\to\infty}\frac{1}{R}\log\vol B_{\htil}(\tilde x,R)
\]
exists and does not depend on $\tilde x$ \cite{Man79}. It measures the \emph{asymptotic}
exponential growth of balls in $\Mt$. Suppose that $M$ carries a hyperbolic metric $hyp$, that is,
a Riemannian metric of constant curvature $-1$, so that $(\Mt,\widetilde{hyp})$ is the hyperbolic
space $\Hyp^n$. The theorem of Besson, Courtois and Gallot \cite{BCG95} (Katok \cite{Kat82} in
dimension two) shows that every metric $h$ with $\vol(M,h)=\vol(M,hyp)$ satisfies
$\Ent(M,h)\ge\Ent(M,hyp)$. This is a statement about balls of very large radius only.

Guth \cite{Gut11} proved a complementary statement at a fixed scale: there is a constant
$\delta(n)>0$ such that if $\vol(M,h)<\delta(n)\vol(M,hyp)$, then $\Mt$ contains a ball of radius
$1$ whose volume is larger than that of a ball of radius $1$ in $\Hyp^n$. He asked whether a
single constant $\delta(n)$ works for \emph{all} radii $R\ge1$ simultaneously. In \cite{Kar15}, the
author answered this question positively for surfaces. Sabourau \cite{Sab22} answered it in all
dimensions, for closed manifolds admitting a metric of negative curvature, and Alpert
\cite{Alp24} recently relaxed the volume hypothesis.

The approach of \cite{Kar15} to surfaces goes through metric graphs, where the analogous
questions have independent interest. We now fix the notation for graphs.

\subsection{Metric graphs and notation}\label{subsec:notation}
A \emph{graph} is a one-dimensional CW-complex, possibly with loops and multiple edges; unless
stated otherwise it is finite. The \emph{degree} $\deg v$ of a vertex $v$ is the number of edges
incident to it, loops being counted twice. A graph is \emph{trivalent} if all its vertices have
degree~$3$. A \emph{subgraph} is a union of vertices and edges. The \emph{first Betti number} of a
finite connected graph $\Gamma$ with $e$ edges and $v$ vertices is $b(\Gamma)=e-v+1$; it is the rank
of $H_1(\Gamma;\mathbb Z)$.

A \emph{metric graph} $(\Gamma,h)$ is a graph in which each edge is identified with a compact
interval of positive length, endowed with the induced length metric $d$. The length of a subset
$A\subset\Gamma$, denoted $\length(A)$, is its one-dimensional Hausdorff measure; in particular
$\length(\Gamma,h)$ is the sum of the edge lengths. A \emph{metric tree} is a connected, simply
connected, locally finite metric graph, possibly infinite. The universal cover of a connected
metric graph $(\Gamma,h)$, with the lifted metric, is a metric tree denoted $(\Gt,\htil)$; its
vertices are the lifts of the vertices of $\Gamma$. For a metric graph or tree $X$, a point $x\in X$
and $R\ge0$, we write $B_X(x,R)=\{y\in X:d(x,y)\le R\}$, and $B_{\htil}(\tilde x,R):=B_{\Gt}(\tilde x,R)$.
As for manifolds, the \emph{volume entropy} of a connected metric graph is
\[
\Ent(\Gamma,h)=\lim_{R\to\infty}\frac{1}{R}\log\length B_{\htil}(\tilde x,R),
\]
which exists and does not depend on $\tilde x$ \cite{Man79}.

For every $b\ge2$ we fix a connected trivalent graph $\Gamma_b$ of first Betti number $b$, with all
edges of length $1$. A trivalent graph satisfies $2e=3v$, so $\Gamma_b$ has $3b-3$ edges and
$\length(\Gamma_b)=3b-3$. For $c>0$, let $T_c$ be the infinite trivalent metric tree all of whose edges
have length $c$; it is unique up to isometry, and the universal cover of $\Gamma_b$ is $T_1$ for every
choice of $\Gamma_b$.

\begin{definition}[Model function]\label{def:Vc}
For $c>0$ and $R\ge0$, let $\cV_c(R):=\length B_{T_c}(v,R)$, where $v$ is a vertex of $T_c$. Since
$T_c$ is vertex-transitive, $\cV_c(R)$ does not depend on $v$. For $j\ge0$, exactly $3\cdot2^j$ points of
$T_c$ lie at distance $r\in(jc,(j+1)c)$ from $v$, so, with $k=\lfloor R/c\rfloor$,
\[
\cV_c(R)=3c\,(2^k-1)+3\,(R-kc)\,2^k,\qquad\text{and}\qquad \cV_c(R)=c\,\cV_1(R/c).
\]
\end{definition}

The function $\cV_1$ is the function $V'_{(\Gt_b,\htil_b)}$ of \cite{Kar15}. Kapovich--Nagnibeda
\cite{KN07} and Lim \cite{Lim08} (see also \cite{Riv10,McM15} and the elementary proof in
\cite{Sab26}) proved that for every connected metric graph $(\Gamma,h)$ with $b(\Gamma)=b\ge2$,
\begin{equation}\label{eq:KNL}
\Ent(\Gamma,h)\cdot\length(\Gamma,h)\ \ge\ (3b-3)\ln 2=\Ent(\Gamma_b)\cdot\length(\Gamma_b).
\end{equation}
Thus $\Gamma_b$ plays for graphs the role that the hyperbolic metric plays for manifolds. In
\cite{Kar15}, the author proved the following all-scale graph analogue of Guth's question.

\begin{theorem}[{\cite[Theorem C]{Kar15}}]\label{thm:Karam}
Let $\lambda\in(0,\frac16]$ and let $(\Gamma,h)$ be a connected metric graph with $b(\Gamma)=b\ge2$ and
$\length(\Gamma,h)\le\lambda\length(\Gamma_b)$. Then there is a vertex $\tilde u$ of $\Gt$ such that
$\length B_{\htil}(\tilde u,R)\ge(1-3\lambda)\,\cV_1(R)$ for every $R\ge0$.
\end{theorem}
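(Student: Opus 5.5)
The plan is to derive the statement from the comparison principle announced in the abstract. Among metric trees whose vertices have degree at least three and whose edges have length at most $c$, the tree $T_c$ has the smallest balls at every radius. I would first prove this principle directly, by induction on the combinatorial distance from the centre. Given such a tree $X$ and a vertex $x$, I would build, level by level, a length-nonincreasing map from $B_{T_c}(v,R)$ onto a subset of $B_X(x,R)$ that is injective off a finite set. Each edge of $T_c$ would be sent onto an initial segment of an edge of $X$, and each of the two forward branches at a vertex of $T_c$ would be matched with a distinct forward branch of $X$, which exists since $\deg\ge3$. A short computation with the formula of Definition~\ref{def:Vc} then gives $\cV_c(R)\ge\cV_1(R)$ for $c\le1$, because finer branching only helps. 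So it suffices to find, inside $\Gt$, a subtree $Y$ containing a vertex $\tilde u$ with all vertex degrees at least $3$ and all edge lengths at most $1$, up to the loss allowed in the statement.

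\textbf{Normalising $\Gamma$.} I would first repeatedly delete vertices of degree one with their edges. This does not change $b$, does not increase length, and only shrinks balls in the universal cover. I would then forget vertices of degree two, which leaves the metric unchanged. Afterwards every vertex has degree at least $3$, so $\Gamma$ has at most $3b-3$ edges and at most $2b-2$ vertices. Let $E_{\rm long}$ be the set of edges of length greater than $1$. Since $\length(\Gamma)\le\lambda(3b-3)$, we get $|E_{\rm long}|<3\lambda(b-1)\le\frac{b-1}{2}$ when $\lambda\le\frac16$. Hence the subgraph $\Gamma'$ spanned by the short edges has Betti number at least $b-|E_{\rm long}|>\frac{b+1}{2}\ge\frac32$, that is, at least $2$. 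Its lift to $\Gt$ is a union of subtrees, and each lift of a component of $\Gamma'$ is convex in $\Gt$. So balls in that subtree are contained in the corresponding balls of $\Gt$.

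\textbf{Main obstacle.} The subtree from the previous step may have vertices of degree $2$: vertices of $\Gamma$ lying on long edges lose those edges when we pass to $\Gamma'$. Smoothing such vertices can concatenate short edges into paths longer than $1$. I would handle this with a counting argument weighted by the long edges. Each long edge, of length $\ell>1$, is responsible for at most two degree drops, and I would charge the corresponding lost ball length against $\ell$. For the degree-$2$ vertices I would use a variant of the comparison principle in which one forward branch at a vertex is a single long segment. Such a segment still contributes at least as much length as a unit edge up to radius $R$, with a deficit that is at most a proportion $3\lambda$ of the total, because the long edges make up at most the fraction $\frac{3\lambda(b-1)}{3b-3}=\lambda$ of the edge budget, amplified by the trivalent branching factor. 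Choosing $\tilde u$ as a lift of a vertex of the core of $\Gamma'$ that is furthest from the long edges, and summing these charges level by level, should yield $\length B_{\htil}(\tilde u,R)\ge(1-3\lambda)\cV_1(R)$. Making this charging precise is the step I expect to be the hardest. If it fails, my fallback is to prune to a finite cover of $\Gamma'$ with minimum degree $3$ and no long edges, and to lose the factor $(1-3\lambda)$ through the length discarded in the pruning.
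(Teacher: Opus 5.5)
Your reduction matches the paper's. There the statement is a corollary of Theorem~\ref{thmA}: one finds a subtree of $\Gt$ whose vertices have degree at least $3$ and whose edges have length at most $c=3\lambda\le\frac12$, applies Theorem~\ref{thmB}, and uses $\cV_{3\lambda}\ge\cV_1\ge(1-3\lambda)\cV_1$ (Corollary~\ref{cor:monotone}). No loss is incurred anywhere, so you do not need a place to spend the factor $(1-3\lambda)$.

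\textbf{The gap: constructing the subtree.} This is the heart of the proof. After deleting all edges longer than $1$ at once and smoothing, you rightly note that new arcs longer than $1$ can appear. Your charging argument for them is not a proof. At radius $R$ the ball around $\tilde u$ meets exponentially many lifts of the long edges. Its deficit relative to $\cV_1(R)$ depends on where these lifts, and the new degree-two vertices, sit along the rays from $\tilde u$. A global budget statement such as ``long edges are a fraction $\lambda$ of the length'' does not control this radius by radius, and choosing $\tilde u$ far from the long edges only helps at small radii. The fallback fails outright. A covering map preserves vertex degrees, and the arcs of a finite cover of $\Gamma'$ are lifts of arcs of $\Gamma'$ with the same lengths, so passing to a cover never removes a long arc.

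\textbf{The missing idea: iterate with an invariant.} As in Lemma~\ref{lem:core}, you must iterate the deletion while carrying an invariant that survives it. The right quantity is $\length(\Gamma)\le c\,(b(\Gamma)-1)$, and you may take $c=1$ because $3\lambda(b-1)\le b-1$. Prune degree-one vertices, smooth, and delete a single arc of length $>c$.
\begin{itemize}
\item If the arc does not disconnect, $b$ drops by $1$ and the length drops by more than $c$, so the invariant holds strictly.
\item If it disconnects, both pieces have minimum degree $\ge2$, hence Betti number $\ge1$. Since $\length(\Gamma')+\length(\Gamma'')<c(b(\Gamma')-1)+c(b(\Gamma'')-1)$, one piece satisfies the strict invariant. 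Together with positive length, this forces its Betti number to be $\ge2$.
\end{itemize}
Induction on $b$ then yields $\Gamma_0$ with $b(\Gamma_0)\ge2$, no degree-one vertices, and all arcs of length $\le c$. The base case is $b=2$, where every arc is shorter than the total length $\le c$. Lemma~\ref{lem:subtree} turns a lift of $\Gamma_0$ into the subtree you want.

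The concatenated long arcs you call the main obstacle are simply arcs of the next graph, deleted at the next step. The invariant guarantees this never exhausts the topology. Your one-shot deletion can be repaired with the same bookkeeping. Deleting $k$ edges from a connected graph gives $\sum_C(b(C)-1)=b-1-k$ over all components, counting trees and isolated vertices with $-1$. Hence some component satisfies the strict invariant. Your Betti-number count is not an invariant you can carry to the next round, and you still have to iterate.

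\textbf{A separate problem in your sketch of Theorem~\ref{thmB}.} The inequality points the wrong way. An injective, length-nonincreasing map from $B_{T_c}(v,R)$ into $B_X(x,R)$ only shows that its image has length \emph{at most} $\cV_c(R)$. You need the map to preserve length. Also, an edge of $T_c$ has length $c$, at least that of any edge of $X$, so it cannot be sent isometrically onto an initial segment of an edge of $X$. Images of $T_c$-edges must pass through vertices of $X$. The paper's induction $\Br_e\ge\beta_c$, together with the convexity of $\beta_c$ (Lemma~\ref{lem:beta}(2)), handles exactly this bookkeeping.
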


The constant $(1-3\lambda)$ comes from a bi-Lipschitz embedding of a rescaled copy of $T_1$ in
$\Gt$. Such an embedding loses a multiplicative constant and, more importantly, does not
see that a shorter graph should have \emph{faster} growth.

\subsection{Main results}\label{subsec:results}
Our main theorem removes the multiplicative loss, extends the range of $\lambda$, and
gives the growth rate one expects from scaling.

\begin{maintheorem}\label{thmA}
Let $(\Gamma,h)$ be a connected metric graph with $b(\Gamma)=b\ge2$, and let $\lambda>0$ be such that
\[
\length(\Gamma,h)\ \le\ \lambda\,\length(\Gamma_b)=3\lambda(b-1).
\]
Then there exists a vertex $\tilde u$ of $\Gt$ such that, for every $R\ge0$,
\[
\length B_{\htil}(\tilde u,R)\ \ge\ \cV_{3\lambda}(R)=3\lambda\,\cV_1\!\Big(\frac{R}{3\lambda}\Big)\ \ge\ 9\lambda\big(2^{R/(3\lambda)}-1\big).
\]
In particular, if $\lambda\le\frac13$, then $\length B_{\htil}(\tilde u,R)\ge\cV_1(R)$ for every $R\ge0$.
\end{maintheorem}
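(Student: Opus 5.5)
We first reduce $\Gamma$ to a graph whose vertices all have degree at least three. Degree-one vertices, together with their hanging edges, can be removed repeatedly. Each removal strictly decreases the length and does not change $b$. Lifting the remaining core $\Gamma'$ gives a subtree $\Gt'\subset\Gt$ whose vertices are vertices of $\Gt$, so balls in $\Gt'$ are contained in balls of $\Gt$. Degree-two vertices are then erased by merging their two incident edges into one. This leaves the metric unchanged, and the vertices of the new graph are still vertices of $\Gt$. We may therefore assume that every vertex of $\Gamma$ has degree $\ge3$ and that $\length(\Gamma)\le3\lambda(b-1)$. Such a graph has at most $3b-3$ edges, since $2e\ge3v$ and $e-v=b-1$.

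The key step is the comparison principle announced in the abstract: if $T$ is a metric tree whose vertices have degree $\ge3$ and whose edges have length $\le c$, then $\length B_T(v,R)\ge\cV_c(R)$ for every vertex $v$ and every $R\ge0$. We would prove it by induction on the combinatorial depth, in a rooted form. Let $W_c(R)$ denote the length of the ball of radius $R$ in the binary rooted tree with edges of length $c$, measured from the root, whose degree is one. The claim is that a rooted tree whose root has $\ge1$ child, whose other vertices have $\ge2$ children, and whose edges have length $\le c$ has ball length $\ge W_c(R)$. The inductive inequality has the form $\min(R,\ell)+\sum_i F_i(R-\ell)$ with $\ell\le c$. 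Shortening an edge only moves the subtree closer, so the ball grows, since the ball function of the subtree is nondecreasing. We need $R'\mapsto W_c(R')$ to satisfy $\min(R,\ell)+2W_c(R-\ell)\ge\min(R,c)+2W_c(R-c)$ for $\ell\le c$. This holds because $W_c$ has slope $\ge1$ everywhere: the derivative of $2W_c(R-\ell)$ in $\ell$ is $\le-2$, while that of $\min(R,\ell)$ is at most $1$. Extra children only add length. For the unrooted statement at a vertex of degree $\ge3$, we take three branches and use $\cV_c=3W_c$. To run the induction cleanly we would first truncate to finite depth, since balls are compact and meet finitely many edges.

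The remaining issue is the edge-length hypothesis. The average edge length of $\Gamma$ is at most $3\lambda(b-1)/e$, but individual edges may exceed $c=3\lambda$, so the comparison principle does not apply directly. This is the main obstacle, and we would address it by choosing the base vertex well rather than controlling every edge. The first attempt would be a weighted version of the comparison principle. Define $f(v,R)=\min$ over rooted subtree growth, and prove by averaging over the finitely many edges of $\Gamma$ that some vertex $\tilde u$ satisfies the bound. The idea is that long edges are compensated because the total length is small, which forces many short edges. Concretely, we would show $F(\ell_1)+F(\ell_2)\ge2F(\tfrac{\ell_1+\ell_2}{2})$, a convexity-type property of the map $\ell\mapsto \min(R,\ell)+2W_c(R-\ell)$ viewed along non-backtracking paths, and then use Jensen along a random non-backtracking walk of $\Gamma$ to replace actual edge lengths by their average.

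Failing that, a fallback is to contract a spanning tree with short total length, as in \cite{Kar15}. This yields a bouquet-like graph with $b$ loops. A minimal-length shortest-loop system then gives $b$ loops whose ordered lengths are controlled by $3\lambda$ on average. We would pick $\tilde u$ as a lift of the vertex adjacent to the shortest edges and compare its ball with $T_{3\lambda}$ directly. The final inequality $\cV_{3\lambda}(R)\ge9\lambda(2^{R/(3\lambda)}-1)$ is routine from the formula in Definition~\ref{def:Vc}, using that $\cV_1$ is piecewise linear and lies above $3(2^{R}-1)$ at integers with convexity between them.
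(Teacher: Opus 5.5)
Your reduction to vertices of degree at least three is correct, and so is your rooted comparison principle. Your $W_c$ is the paper's $\beta_c$, and the monotonicity of $\ell\mapsto\min(R,\ell)+2W_c(R-\ell)$ on $[0,c]$ is exactly the paper's inductive step, so that part amounts to Theorem~\ref{thmB}. The gap is the step you flag yourself: Theorem~\ref{thmB} needs \emph{every} edge of the tree to have length at most $c=3\lambda$, and neither of your workarounds supplies this.

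The Jensen argument is not a proof. Write $\length B(\tilde u,R)=\int_0^R N(r)\,dr$, where $N(r)$ counts non-backtracking paths of length exactly $r$ from $\tilde u$. This is a branching, multiplicative quantity, not the expectation of a function of a single random path. Convexity of $\ell\mapsto\min(R,\ell)+2W_c(R-\ell)$ gives no control over how long and short edges are arranged around a fixed vertex. Averaging over vertices would also not produce one vertex that works for all $R$ at once. A comparison principle for trees whose edges are short only on average is what the sharp Conjecture~\ref{conjD} would need, so it cannot be invoked as a routine step.

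The spanning-tree fallback fails for a different reason. Collapsing a spanning tree is only $1$-Lipschitz on the universal cover, so balls in the cover of the contracted graph bound balls in $\Gt$ from below only after the radius is rescaled. That reintroduces the loss of \cite[Proposition~3.1]{Kar15}, which the theorem is meant to remove.

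The missing idea is to delete long arcs rather than average them out. You already use that passing to a connected subgraph is harmless: a component of its preimage is a subtree of $\Gt$ whose balls lie inside those of $\Gt$. So suppose $\length(\Gamma)\le c(b-1)$, $b\ge3$, $\Gamma$ has no vertex of degree one, and some arc $a$ has $\length(a)>c$. Deleting the edges and interior vertices of $a$ leaves total length $<c(b-2)$.
\begin{itemize}
\item If $a$ does not disconnect $\Gamma$, the remainder is connected with Betti number $b-1\ge2$ and satisfies the same inequality.
\item If $a$ disconnects $\Gamma$, each of the two components keeps all degrees $\ge2$. Their Betti numbers $b',b''$ are therefore $\ge1$ with $b'+b''=b$, and one component satisfies $\length<c(b'-1)$. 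Since its length is positive, $b'\ge2$.
\end{itemize}
Induct on $b$, pruning degree-one vertices at each stage. The base case $b=2$ is automatic, because each of the at least two arcs is shorter than the total length, which is $\le c$.

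This yields a connected subgraph $\Gamma_0$ with $b(\Gamma_0)\ge2$, no vertex of degree one, and all arcs of length $\le c$. Smoothing $\Gamma_0$ and lifting it gives a tree to which your comparison principle applies, and any lift of a branch vertex of $\Gamma_0$ can serve as $\tilde u$. Your proposal does not address the final clause for $\lambda\le\frac13$. It follows by applying the comparison principle to $T_{3\lambda}$ with edge bound $1$, which gives $\cV_{3\lambda}\ge\cV_1$.
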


For $\lambda\le\frac13$ we have $\cV_{3\lambda}\ge\cV_1\ge(1-3\lambda)\cV_1$ (Corollary~\ref{cor:monotone}),
so Theorem~\ref{thmA} contains Theorem~\ref{thm:Karam}. It improves on it in three ways:
\begin{enumerate}
\item the range is $\lambda\le\frac13$ instead of $\lambda\le\frac16$;
\item there is no multiplicative loss;
\item the exponential growth rate is $\frac{\ln2}{3\lambda}$ instead of $\ln 2$.
\end{enumerate}
Moreover, the proof shows that $\tilde u$ can be taken to be any lift of any vertex of degree at
least $3$ of an explicit subgraph of $\Gamma$ (Section~\ref{sec:proofA}).

The key new ingredient is the following comparison principle, which is sharp and may be of
independent interest.

\begin{maintheorem}\label{thmB}
Let $c>0$ and let $T$ be a metric tree in which every vertex has degree at least $3$ and every
edge has length at most $c$. Assume that the edge lengths of $T$ are bounded below by a positive
constant. Then for every vertex $v$ of $T$ and every $R\ge0$,
\[
\length B_T(v,R)\ \ge\ \cV_c(R).
\]
Equality holds for all $R$ when $T=T_c$.
\end{maintheorem}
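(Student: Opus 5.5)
We prove Theorem~\ref{thmB} by reducing an arbitrary tree $T$ to $T_c$ through two monotone operations, each of which can only shrink balls around $v$. \emph{Pruning} removes branches to bring every vertex down to degree $3$. \emph{Stretching} lengthens every edge to exactly $c$. It is convenient to work with the distance function rather than with lengths directly. For a metric tree $X$ and a vertex $v$, write $n_X(r)$ for the number of points of $X$ at distance exactly $r$ from $v$, so that $\length B_X(v,R)=\int_0^R n_X(r)\,dr$. For $T_c$ we have $n_{T_c}(r)=3\cdot 2^j$ for $r\in(jc,(j+1)c)$. It therefore suffices to prove the pointwise bound $n_T(r)\ge n_{T_c}(r)$ for almost every $r$. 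This pointwise statement is stronger than the theorem, and it is the natural one to induct on.

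\textbf{Step 1: reduction to a trivalent rooted subtree.} At $v$ we keep exactly three incident edges. At every other vertex $w$ (reached from its parent) we keep exactly two child edges, which is possible since $\deg w\ge3$. We discard everything else. The result is a subtree $T'\subset T$ containing $v$, and since $T$ is a tree, distances from $v$ in $T'$ agree with those in $T$. Hence $n_T\ge n_{T'}$ pointwise. The lower bound on edge lengths guarantees local finiteness in the radial sense: only finitely many vertices lie in each ball. This ensures that $T'$ is infinite in every direction, that each point at distance $r$ lies on a single geodesic ray from $v$, and that the counting is well defined.

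\textbf{Step 2: comparison for the trivalent rooted tree with edges of length at most $c$.} We now show $n_{T'}(r)\ge 3\cdot 2^{\lfloor r/c\rfloor}$ for almost every $r$. Fix $r$ not equal to the distance of any vertex; this excludes only countably many values. Each point $x$ at distance $r$ lies in the interior of an edge. We count these points through the descendant structure. Each of the three initial branches starts with an edge of length at most $c$, and beyond depth $\ell$ every branch has split, giving two subtrees. Formally, let $N(r)$ be the number of points at distance $r$ in a binary rooted tree, meaning a root edge followed by a vertex with two children, recursively, with all edge lengths at most $c$. We claim $N(r)\ge 2^{\lfloor r/c\rfloor}$, and prove it by induction on $k=\lfloor r/c\rfloor$. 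For $k=0$ the bound $N\ge1$ holds because the tree is infinite, so some point lies at each distance. For the inductive step, the root edge has length $\ell\le c$. For $r>\ell$ the count splits as $N(r)=N_1(r-\ell)+N_2(r-\ell)$, and since $r-\ell\ge r-c$ we get $\lfloor (r-\ell)/c\rfloor\ge k-1$. Induction then gives $N(r)\ge 2\cdot 2^{k-1}$. The case $r<\ell$ cannot occur when $k\ge1$, because $r\ge c\ge\ell$. Summing over the three branches at $v$ gives $n_{T'}(r)\ge 3\cdot 2^{\lfloor r/c\rfloor}=n_{T_c}(r)$.

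\textbf{Step 3: integrate.} Integrating the pointwise bound from $0$ to $R$ gives $\length B_T(v,R)\ge\cV_c(R)$. Equality for $T=T_c$ is immediate from Definition~\ref{def:Vc}. I expect the main obstacle to be the induction in Step~2, since it must run on a possibly infinite tree. This is where the positive lower bound on edge lengths is used: it makes $\lfloor r/c\rfloor$-depth induction well founded, it ensures each ball meets finitely many edges so that all counts are finite, and it guarantees that every ray reaches every distance, which is needed for the base case $N\ge1$ at all radii. Without the lower bound, rays could accumulate at finite distance, and the base case would fail.
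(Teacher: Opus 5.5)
Your argument is correct, and it follows a different route from the paper. The paper works with lengths throughout. It proves $\Br_e(R)\ge\beta_c(R)$ for every branch by induction on $\lfloor R/m\rfloor$. The key step uses the slope property of $\beta_c$ (Lemma~\ref{lem:beta}(2)) to show $\ell+2\beta_c(R-\ell)\ge c+2\beta_c(R-c)$, i.e.\ that a shorter edge cannot produce a smaller ball.

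You compare derivatives instead. You show the sphere counts satisfy $n_T(r)\ge 3\cdot2^{\lfloor r/c\rfloor}=n_{T_c}(r)$ for almost every $r$, then integrate using $\length B_T(v,R)=\int_0^R n_T(r)\,dr$. That identity holds because the distance to $v$ is affine of slope $1$ along each edge of a tree. At this level, the effect of a short edge is absorbed by the trivial inequality $\lfloor (r-\ell)/c\rfloor\ge\lfloor r/c\rfloor-1$, so no convexity is needed.

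Your conclusion is also stronger than Theorem~\ref{thmB}. The function $R\mapsto\length B_T(v,R)-\cV_c(R)$ is nondecreasing, so the comparison holds for every annulus $B_T(v,R)\setminus B_T(v,R')$, not only for balls. What the paper's version buys is that it never leaves ball lengths. It directly yields the branch-wise bound $\Br_e\ge\beta_c$, mirroring the recursion that defines $\cV_c$.

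One point of phrasing in Step~2. Inducting on $k=\lfloor r/c\rfloor$ with the hypothesis only at level exactly $k-1$ is circular, because $\lfloor (r-\ell)/c\rfloor$ can equal $k$ when $\ell$ is small. State the claim at level $k$ as ``$N(s)\ge2^k$ for every $s\ge kc$ that is not a vertex distance, in every binary tree of the class''. Your base case, $N\ge1$ at every distance, already has this form, and your inductive step proves this claim verbatim.

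With this formulation the induction is well founded whatever the edge lengths are. So, contrary to your closing remark, the positive lower bound on edge lengths is not what makes it well founded. In your proof that bound is used only to ensure that every ray has infinite length (the base case) and that the counts are finite. The paper's induction on $\lfloor R/m\rfloor$, by contrast, does rely on it for well-foundedness.
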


In particular, $c\mapsto\cV_c(R)$ is non-increasing (Corollary~\ref{cor:monotone}). In the setting of
\cite[Proposition~3.1]{Kar15}, Theorem~\ref{thmB} shows that \emph{every} vertex of the tree has
large balls, with the optimal constant (Remark~\ref{rem:Karam31}).

\medskip
\noindent\textbf{Application to surfaces.}
For a closed surface $M$ with a Riemannian metric $h$, put
\[
V_{(\Mt,\htil)}(R):=\sup_{\tilde x\in\Mt}\area B_{\htil}(\tilde x,R),
\]
and recall that $V_{\Hyp^2}(R):=\area B_{\Hyp^2}(x,R)=2\pi(\cosh R-1)$ for any $x\in\Hyp^2$.
In \cite{Kar15}, the lower bounds on $V_{(\Mt,\htil)}$ are deduced from Theorem~\ref{thm:Karam},
applied with $\lambda=\frac16$ to a short graph in $M$ that carries its homology. Using
Theorem~\ref{thmA} instead doubles the exponential rate, which then exceeds the hyperbolic rate~$1$.

\begin{maintheorem}\label{thmC}
Let $(M,hyp)$ be a closed hyperbolic surface and $h$ a Riemannian metric on $M$.
\begin{enumerate}
\item If $\area(M,h)\le\frac{1}{2^{13}\pi}\area(M,hyp)$, then for every $R\ge0$,
\[
V_{(\Mt,\htil)}(R)\ \ge\ \frac32\Big(\frac{4^R-1}{\ln4}-R\Big);
\]
in particular $V_{(\Mt,\htil)}(R)\ge V_{\Hyp^2}(R)$ for every $R\ge3$.
\item If $\area(M,h)\le\frac{1}{2^{17}\pi}\area(M,hyp)$, then
$V_{(\Mt,\htil)}(R)\ge V_{\Hyp^2}(R)$ for every $R\ge1$.
\end{enumerate}
\end{maintheorem}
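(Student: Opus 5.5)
The plan is to follow the route of \cite{Kar15}: pass from the surface to a short metric graph, apply Theorem~\ref{thmA} in place of Theorem~\ref{thm:Karam}, and integrate the graph estimate back into an area estimate. Concretely, I would reuse two ingredients from \cite{Kar15}. The first is the construction of a graph $\Gamma\subset M$ of first Betti number $b$ (comparable to the genus) whose length is controlled by the area of $h$ via a systolic/Voronoi-type cutting argument; its universal cover sits in $\Mt$ as a tree on which the lifted distance dominates (up to the fixed normalisation used there) the tree distance. The second is the coarea-type comparison
\[
\area B_{\htil}(\tilde u,R)\ \ge\ \int_0^R \length B_{\Gt}(\tilde u,r)\,dr .
\]
I would quote both exactly as they are used in \cite{Kar15} to deduce its surface bounds from Theorem~\ref{thm:Karam}. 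The only change is the final graph estimate.

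For part (1), \cite{Kar15} runs this with $\lambda=\frac16$. The area threshold $\frac{1}{2^{13}\pi}\area(M,hyp)=\frac{g-1}{2^{11}}$ should be precisely the one making the constructed graph satisfy $\length(\Gamma)\le\frac16\length(\Gamma_b)$. I would check this bookkeeping against \cite{Kar15}. Theorem~\ref{thmA} with $\lambda=\frac16$ then gives a vertex $\tilde u$ with
\[
\length B_{\Gt}(\tilde u,r)\ge 9\cdot\tfrac16\,(2^{2r}-1)=\tfrac32(4^r-1)\quad\text{for all } r\ge0 .
\]
Integrating from $0$ to $R$ yields exactly $\frac32\big(\frac{4^R-1}{\ln4}-R\big)$, which is the claimed bound on $V_{(\Mt,\htil)}(R)$. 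It then remains to prove the elementary inequality $\frac32\big(\frac{4^R-1}{\ln4}-R\big)\ge2\pi(\cosh R-1)$ for $R\ge3$. I would check it numerically at $R=3$ (left side $\approx64.4$, right side $\approx57.0$). For $R\ge3$ I would compare derivatives, $\frac32(4^R-1)$ versus $2\pi\sinh R$, since the left derivative already wins there and its growth rate $\ln4$ exceeds $1$.

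For part (2), the area hypothesis is smaller by the factor $2^4$. Since graph length scales like the square root of area in the construction, this should let the same construction produce a graph with $\lambda=\frac1{24}$. Theorem~\ref{thmA} then gives $\length B_{\Gt}(\tilde u,r)\ge\frac38(256^r-1)$, and integrating gives
\[
V_{(\Mt,\htil)}(R)\ \ge\ \tfrac38\Big(\frac{256^R-1}{\ln256}-R\Big).
\]
I would then check that this dominates $2\pi(\cosh R-1)$ for $R\ge1$: at $R=1$ it is about $16.9$ against about $3.4$, and the derivatives compare as before for $R\ge1$.

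The main obstacle is the transfer step: confirming that the graph construction of \cite{Kar15} really converts these exact area thresholds into $\lambda=\frac16$ and $\lambda=\frac1{24}$, including the correct relation between $b$ and the genus and the rescaling constants. I would also confirm that the coarea inequality holds with constant $1$ and no radius shift. If a radius shift or constant appears there, the stated formulas would need adjusting, so I would first re-derive that lemma carefully from \cite{Kar15}. The remaining calculus inequalities are routine.
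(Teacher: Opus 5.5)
Your part~(1) is the paper's argument: replace Theorem~\ref{thm:Karam} by Theorem~\ref{thmA} with $\lambda=\frac16$, integrate $\cV_{1/2}(r)\ge\frac32(4^r-1)$ over $[0,R]$, and compare derivatives on $[3,\infty)$. There are two harmless slips. First, $G(3)\approx63.7$, not $64.4$. Second, $2\pi\sinh R$ has logarithmic derivative $\coth R>1$, so it is cleaner to bound it by $\pi e^R$, as the paper does.

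Part~(2) is where you take a different route. The paper never reopens the graph construction. It applies part~(1) as a black box to $\mu^2h$ with $\mu\ge4$, uses $V_{(\Mt,\mu^2\htil)}(\mu R)=\mu^2V_{(\Mt,\htil)}(R)$ to get $V_{(\Mt,\htil)}(R)\ge\mu^{-2}G(\mu R)$, and then does the calculus. You rescale one level lower, producing a graph with $\lambda=\frac1{24}$ and invoking Theorem~\ref{thmA} again. That is valid, but it should be justified by an explicit rescaling. It should not rest on a square-root-of-area behaviour of \cite[Proposition~8.1]{Kar15}, which is a fixed-constant statement.

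Concretely, run the part~(1) construction for $16h$ on a finite cover whose $h$-systole is at least $\max\{2R,\frac18\}$. The resulting graph is also minimal for $h$, since minimality is scale-invariant. Its $h$-length is at most $\frac18(2g-1)\le\frac1{24}\length(\Gamma_b)$. Your final bound $\frac38\big(\frac{256^R-1}{\ln256}-R\big)$ equals $\frac1{16}G(4R)$ exactly, which is the paper's bound at $\mu=4$ (this reflects $\cV_{c/\mu}(r)=\mu^{-1}\cV_c(\mu r)$). So the two routes are equivalent. The paper's is shorter only because it needs nothing beyond the statement of part~(1).

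One correction to your description of the transfer step, which you intend to re-derive. $\Gt$ does not in general sit inside $\Mt$ as a tree: a graph that merely captures $H_1(M;\R)$ need not be $\pi_1$-injective. Nor is any domination of the tree distance by the ambient distance used; the easy inequality $d_M\le d_\Gamma$ goes the other way. The paper's mechanism is as follows:
\begin{itemize}
\item For each fixed $R$, pass to a finite cover with $\sys\ge\max\{2R,\frac12\}$, and take a \emph{minimal} graph $\Gamma$ there. The graph, and hence the vertex, depends on $R$, which is harmless since $V_{(\Mt,\htil)}$ is a supremum over centres.
\item Minimality gives $\length\partial B^+(u,r)\ge\length B_\Gamma(u,r)$.
\item The bound $\sys(\Gamma)\ge\sys(M)>2r$ identifies $B_\Gamma(u,r)$ isometrically with $B_{\Gt}(\tilde u,r)$.
\item Since $R\le\frac12\sys(M)$, the ball $B(u,R)$ lifts isometrically to $\Mt$.
\end{itemize}
With these facts your coarea inequality holds with constant $1$ and no radius shift, so your formulas need no adjustment.
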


Under the hypothesis of part~(1), the lower bound of \cite[Theorem~9.2]{Kar15} is
$\frac{1}{2\ln2}(\cosh(R\ln2)-1)$, which grows like $2^R$. It therefore never dominates
$V_{\Hyp^2}(R)$ without rescaling. The constant $\delta$ answering Guth's question in \cite{Kar15}
is not made explicit; part~(2) provides an explicit one.

\medskip
\noindent\textbf{A sharp conjecture.}
Theorem~\ref{thmA} is sharp up to a factor of $3$ in the scale, and we believe this factor is
an artefact of the method (Section~\ref{sec:sharp}).

\begin{mainconj}\label{conjD}
Let $(\Gamma,h)$ be a connected metric graph with $b(\Gamma)=b\ge2$, and let $\lambda>0$ be such that
$\length(\Gamma,h)\le\lambda\,\length(\Gamma_b)$. Then there is a vertex $\tilde u$ of $\Gt$ such that
\[
\length B_{\htil}(\tilde u,R)\ \ge\ \cV_{\lambda}(R)\qquad\text{for every }R\ge0.
\]
\end{mainconj}

Conjecture~\ref{conjD} would be a scale-by-scale strengthening of \eqref{eq:KNL}
(Remark~\ref{rem:entropy}), and it would be optimal: $\Gamma_b$ with all edges of length $\lambda$
attains equality at every radius (Proposition~\ref{prop:optimal}). Theorem~\ref{thmA} proves it with
$\lambda$ replaced by $3\lambda$. A \emph{theta graph} consists of two vertices joined by three edges.
We show that the conjecture is attained with equality, at a positive radius, by a two-parameter
family of theta graphs that are not equilateral (Example~\ref{ex:theta}).

\subsection{Organisation}
Section~\ref{sec:prelim} collects preliminaries on arcs, subtrees and the function $\cV_c$.
Section~\ref{sec:comparison} proves Theorem~\ref{thmB}. Section~\ref{sec:core} extracts a short
core from a short graph. Section~\ref{sec:proofA} proves Theorem~\ref{thmA}.
Section~\ref{sec:surfaces} proves Theorem~\ref{thmC}, and Section~\ref{sec:sharp} discusses
sharpness and Conjecture~\ref{conjD}.

\section{Preliminaries}\label{sec:prelim}

\subsection{Arcs and smoothing}

\begin{definition}\label{def:arc}
Let $\Gamma$ be a finite connected graph with $b(\Gamma)\ge2$ and no vertex of degree one. Then $\Gamma$ has
vertices of degree at least $3$, which we call \emph{branch vertices}. Indeed, if all degrees were
$2$, then $\Gamma$ would be a cycle and $b(\Gamma)=1$. An \emph{arc} of $\Gamma$ is a path whose endpoints are
branch vertices (possibly equal) and whose interior vertices all have degree $2$. Every edge lies in
exactly one arc. \emph{Smoothing} $\Gamma$ means forgetting the vertices of degree $2$, so that each arc
becomes a single edge of the same length. This yields a graph structure in which every vertex has
degree at least $3$, on the same metric space.
\end{definition}

\subsection{Subtrees of universal covers}

\begin{lemma}\label{lem:subtree}
Let $\Gamma$ be a finite connected metric graph, $\Gamma_0\subset\Gamma$ a connected subgraph, and $T$ a
connected component of the preimage of $\Gamma_0$ in $\Gt$. Then $T$ is isometric to the universal cover
of $\Gamma_0$, and the intrinsic metric of $T$ is the restriction of the metric of $\Gt$. In particular,
$B_T(x,R)=B_{\Gt}(x,R)\cap T$ for all $x\in T$ and $R\ge0$, and
\[
\length B_{\Gt}(x,R)\ \ge\ \length B_T(x,R).
\]
\end{lemma}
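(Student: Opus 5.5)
The plan is to work directly with the covering map $p\colon\Gt\to\Gamma$ and the fact that $\Gt$ is a metric tree. Let $p_0=p|_T\colon T\to\Gamma_0$.

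\emph{Step 1: $p_0$ is a universal covering.} Since $\Gamma_0$ is a subgraph, it is a union of closed cells, and $p^{-1}(\Gamma_0)$ is a subcomplex of $\Gt$. The restriction of a covering map to the preimage of a subspace is again a covering map. Restricting further to a connected component $T$ gives a covering $p_0\colon T\to\Gamma_0$, because $\Gamma_0$ is connected and locally path connected. Next, $T$ is a connected subgraph of the tree $\Gt$, hence itself a tree and simply connected. Therefore $p_0$ is a universal covering of $\Gamma_0$. The metric of $\Gamma_0$ is the restriction of the edge lengths of $\Gamma$, and $p$ is a local isometry preserving edge lengths. So $T$, with the length metric induced by its edge lengths, is isometric to the universal cover of $\Gamma_0$ with its lifted metric.

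\emph{Step 2: the intrinsic metric is the restricted metric.} This is the main point, and it is where the tree structure is used. Take $x,y\in T$. In $\Gt$ there is a unique injective path (geodesic arc) $[x,y]$ joining them, and $d_{\Gt}(x,y)=\length[x,y]$. Since $T$ is connected and is a subgraph, it contains some path from $x$ to $y$. Every path in a tree between two points contains the unique arc between them, and since $T$ is a subcomplex we get $[x,y]\subset T$. The intrinsic distance $d_T(x,y)$ is the infimum of lengths of paths in $T$ from $x$ to $y$. It is at least $d_{\Gt}(x,y)$, since paths in $T$ are paths in $\Gt$. It is at most $\length[x,y]=d_{\Gt}(x,y)$, since $[x,y]$ lies in $T$. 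Hence $d_T=d_{\Gt}|_T$.

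To justify that any path in a tree contains the arc, I would use the standard fact that any path from $x$ to $y$ in a simply connected graph passes through every point of $[x,y]$. Indeed, removing an interior point $z$ of $[x,y]$ disconnects $\Gt$ with $x$ and $y$ in different components, since otherwise there would be an embedded cycle. Each $z\in[x,y]$ therefore lies on the path, so $[x,y]$ is contained in its image.

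\emph{Step 3: conclusion.} From Step 2 we get $B_T(x,R)=\{y\in T:d_{\Gt}(x,y)\le R\}=B_{\Gt}(x,R)\cap T$. Length is the one-dimensional Hausdorff measure, which is monotone under inclusion, and on $T$ the intrinsic and restricted metrics agree, so the Hausdorff measures computed in $T$ and in $\Gt$ coincide. This gives $\length B_{\Gt}(x,R)\ge\length B_T(x,R)$.

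I expect no real obstacle. The only care needed is in Step 2: one must use the uniqueness of arcs in trees, not merely that $T$ is connected, and one must note that $p^{-1}(\Gamma_0)$ is a subcomplex so that $T$ is a genuine subgraph.
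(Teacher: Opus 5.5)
Your proposal is correct and follows the paper's proof. The paper likewise identifies $T$ as a simply connected cover of $\Gamma_0$, then uses that a connected subset of a tree contains the geodesic between any two of its points, so the intrinsic and restricted metrics agree. The only difference is how simple connectivity of $T$ is obtained: the paper derives it from the $\pi_1$-injectivity of $\Gamma_0\hookrightarrow\Gamma$, whereas you get it directly from $T$ being a connected subgraph of the tree $\Gt$, which is equally valid and slightly more self-contained.
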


\begin{proof}
The inclusion $\Gamma_0\hookrightarrow\Gamma$ induces an injection of fundamental groups, so $T$ is a simply
connected covering space of $\Gamma_0$, that is, its universal cover. A connected subset of a tree contains
the unique geodesic between any two of its points, so the intrinsic metric of $T$ coincides with the
restriction of the metric of $\Gt$.
\end{proof}

\subsection{Branches and the model function}
Let $T$ be a metric tree. An \emph{oriented edge} $e=(v,w)$ of $T$ is an edge together with a choice
of initial vertex $v$ and terminal vertex $w$; we write $\ell(e)$ for its length. The \emph{branch}
of $T$ defined by $e$ is the closure $T_e$ of the connected component of $T\setminus\{v\}$ that
contains the interior of $e$.

For $c>0$, define $\beta_c\colon[0,\infty)\to[0,\infty)$ recursively by
\begin{equation}\label{eq:beta}
\beta_c(R)=R\quad(0\le R\le c),\qquad\beta_c(R)=c+2\beta_c(R-c)\quad(R>c).
\end{equation}
For every oriented edge $e=(v,w)$ of $T_c$, the branch $T_e$ satisfies
$\length\{x\in T_e:d(v,x)\le R\}=\beta_c(R)$. A vertex of $T_c$ has three branches, which meet only at
the vertex, so
\begin{equation}\label{eq:V=3beta}
\cV_c=3\beta_c.
\end{equation}

\begin{lemma}\label{lem:beta}
For all $c>0$ and $R,R',s\ge0$:
\begin{enumerate}
\item $\beta_c(R)=c\,\beta_1(R/c)$ and $\beta_1(s)=2^{\lfloor s\rfloor}-1+(s-\lfloor s\rfloor)\,2^{\lfloor s\rfloor}$;
\item $\beta_c$ is continuous, convex and piecewise linear, with slope $2^k$ on $(kc,(k+1)c)$; in particular,
$\beta_c(R)-\beta_c(R')\ge R-R'$ whenever $R\ge R'$;
\item $\beta_1(s)\ge2^s-1$; consequently $\cV_c(R)\ge3c\,(2^{R/c}-1)$.
\end{enumerate}
\end{lemma}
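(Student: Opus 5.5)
The plan is to establish (1) first and derive (2) and (3) from it.

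For (1), the scaling identity $\beta_c(R)=c\,\beta_1(R/c)$ follows by induction on $k=\lfloor R/c\rfloor$. Set $\gamma(R):=c\,\beta_1(R/c)$. On $[0,c]$ we have $\gamma(R)=c\cdot R/c=R$. For $R>c$,
\[
\gamma(R)=c\bigl(1+2\beta_1(R/c-1)\bigr)=c+2\gamma(R-c),
\]
so $\gamma$ satisfies the recursion \eqref{eq:beta}. Since the recursion determines the function uniquely, interval by interval, $\gamma=\beta_c$. The closed formula for $\beta_1$ is proved by the same induction. Write $s=k+t$ with $t\in[0,1)$, or $t\in[0,1]$ at the top of each step. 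For $k=0$ the formula gives $\beta_1(s)=t=s$. The inductive step reads
\[
1+2\bigl(2^{k-1}-1+t\,2^{k-1}\bigr)=2^k-1+t\,2^k.
\]
At integer endpoints there is an ambiguity, since $s=k+1$ can be read with $t=1$ or with $t=0$ at level $k+1$. Both readings give $2^{k+1}-1$, so the formula is consistent there. The same check gives continuity of $\beta_1$.

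For (2), reduce to $c=1$ by the scaling in (1); this changes the slope intervals to $(kc,(k+1)c)$ and leaves slopes unchanged, since $\beta_c'(R)=\beta_1'(R/c)$. On $(k,k+1)$ the closed formula is affine in $s$ with slope $2^k$. The function is continuous at the integers, as checked above. The slopes $2^k$ are nondecreasing, so $\beta_1$ is convex. All slopes are at least $1$, so $\beta_c(R)-\beta_c(R')=\int_{R'}^{R}\beta_c'\ge R-R'$.

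For (3), on $[k,k+1]$ compare the affine function $2^k-1+(s-k)2^k$ with the convex function $2^s-1$. The two agree at $s=k$, where both equal $2^k-1$, and at $s=k+1$, where both equal $2^{k+1}-1$. A convex function lies below its chord, so $2^s-1\le\beta_1(s)$ on each such interval. This is the only step requiring an idea, and it is a short one. Finally, \eqref{eq:V=3beta} and (1) give
\[
\cV_c(R)=3\beta_c(R)=3c\,\beta_1(R/c)\ge 3c\,(2^{R/c}-1).
\]
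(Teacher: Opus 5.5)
Your proof is correct and follows the paper's route. The paper also gets (1) and (2) by induction on $\lfloor R/c\rfloor$ from \eqref{eq:beta} (you read (2) off the closed formula, which is the same computation), and it proves (3) by exactly your chord argument: $\beta_1$ agrees with the convex function $2^s-1$ at the integers and is affine in between, and the bound on $\cV_c$ then follows from (1) and \eqref{eq:V=3beta}.
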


\begin{proof}
Parts (1) and (2) follow from \eqref{eq:beta} by induction on $\lfloor R/c\rfloor$. For (3), $\beta_1$ agrees
with the convex function $s\mapsto2^s-1$ at the integers and is affine in between, so it lies above it. The
consequence follows from (1) and \eqref{eq:V=3beta}.
\end{proof}

\section{The comparison principle}\label{sec:comparison}

\begin{proof}[Proof of Theorem~\ref{thmB}]
Let $m>0$ be a lower bound for the edge lengths of $T$. For an oriented edge $e=(v,w)$ of $T$, put
\[
\Br_e(R):=\length\{x\in T_e : d(v,x)\le R\}.
\]
We prove that $\Br_e(R)\ge\beta_c(R)$ for every oriented edge $e$ and every $R\ge0$, by induction
on $k=\lfloor R/m\rfloor$. Let $\ell=\ell(e)\in[m,c]$.

\emph{Case $R\le\ell$.} Then $\Br_e(R)=R=\beta_c(R)$, because $R\le\ell\le c$. This includes $k=0$.

\emph{Case $R>\ell$.} Since $\deg w\ge3$, there are at least two oriented edges $e'=(w,w')$ with
$e'\neq(w,v)$, and their branches $T_{e'}$ are contained in $T_e$ and meet only at $w$. Since
$R-\ell\le R-m$, the induction hypothesis applies to them at radius $R-\ell$, and
\[
\Br_e(R)\ \ge\ \ell+2\beta_c(R-\ell)=:\varphi(\ell).
\]
If $R\le c$, then $R-\ell\le c$ and $\varphi(\ell)=\ell+2(R-\ell)\ge R=\beta_c(R)$.
If $R>c$, then $\beta_c(R)=\varphi(c)$ by \eqref{eq:beta}. Since $R-\ell\ge R-c$,
Lemma~\ref{lem:beta}(2) gives
\[
\varphi(\ell)-\varphi(c)=(\ell-c)+2\big(\beta_c(R-\ell)-\beta_c(R-c)\big)\ \ge\ (\ell-c)+2(c-\ell)=c-\ell\ \ge\ 0.
\]
This completes the induction. Finally, $B_T(v,R)$ is the union of the sets
$\{x\in T_e:d(v,x)\le R\}$ over the $\deg v\ge3$ oriented edges $e$ with initial vertex $v$, which meet
only at $v$. Hence $\length B_T(v,R)\ge3\beta_c(R)=\cV_c(R)$ by \eqref{eq:V=3beta}. Equality for $T=T_c$
holds by Definition~\ref{def:Vc}.
\end{proof}

\begin{corollary}\label{cor:monotone}
For $0<c\le c'$ and $R\ge0$, we have $\cV_c(R)\ge\cV_{c'}(R)$. In particular, $\cV_c(R)\ge\cV_1(R)$ for
$c\le1$.
\end{corollary}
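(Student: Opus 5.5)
The plan is to apply Theorem~\ref{thmB} directly to the model tree $T_c$, with the edge-length bound $c'$ instead of $c$. Let $0<c\le c'$ and $R\ge0$. The tree $T=T_c$ satisfies all of the hypotheses of Theorem~\ref{thmB} with the constant $c'$:
\begin{itemize}
\item it is trivalent, so every vertex has degree at least $3$;
\item every edge has length $c\le c'$;
\item the edge lengths are bounded below by the positive constant $c$.
\end{itemize}
Theorem~\ref{thmB} therefore gives, for any vertex $v$ of $T_c$,
\[
\length B_{T_c}(v,R)\ \ge\ \cV_{c'}(R).
\]
By Definition~\ref{def:Vc} the left-hand side is exactly $\cV_c(R)$. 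Hence $\cV_c(R)\ge\cV_{c'}(R)$, which is the first claim.

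For the second claim, take $c'=1$ and any $c\le1$; the first claim then reads $\cV_c(R)\ge\cV_1(R)$.

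There is no real obstacle here. The only point to check is that $T_c$ meets the lower-bound hypothesis on edge lengths in Theorem~\ref{thmB}, and it does, with lower bound $c>0$.

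As an independent sanity check, one could also argue from the formulas: $\cV_c(R)=3c\,\beta_1(R/c)$, and convexity of $\beta_1$ with $\beta_1(0)=0$ makes $s\mapsto\beta_1(s)/s$ non-decreasing. This gives $c\,\beta_1(R/c)$ non-increasing in $c$. The route through Theorem~\ref{thmB} is shorter, so that is the one to write.
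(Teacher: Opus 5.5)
Your proof is correct and is exactly the paper's argument. You apply Theorem~\ref{thmB} to $T_c$ with the edge bound $c'$, after checking that $T_c$ is trivalent and has edge lengths $c\le c'$, which are bounded below by $c>0$.
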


\begin{proof}
Apply Theorem~\ref{thmB} to $T=T_c$, whose edges have length $c\le c'$, with the edge bound $c'$.
\end{proof}

\begin{remark}\label{rem:Karam31}
Let $\Gamma$ be a finite connected metric graph with $b(\Gamma)\ge2$, all vertices of degree at least $3$,
and all edges of length at most $c$. For every $C'\ge c$, \cite[Proposition~3.1]{Kar15} produces a
vertex $\tilde u$ of $\Gt$ with $\length B_{\htil}(\tilde u,(C'+c)R)\ge C'\cV_1(R)$ for all $R\ge0$. By
Theorem~\ref{thmB} and Corollary~\ref{cor:monotone}, \emph{every} vertex $\tilde u$ of $\Gt$ satisfies
\[
\length B_{\htil}(\tilde u,R)\ \ge\ \cV_c(R)\ \ge\ \cV_{C'+c}(R)=(C'+c)\,\cV_1\!\Big(\frac{R}{C'+c}\Big)\ \ge\ C'\,\cV_1\!\Big(\frac{R}{C'+c}\Big).
\]
\end{remark}

\section{Short cores}\label{sec:core}

The following lemma isolates the combinatorial content of the induction in
\cite[Section~4]{Kar15}.

\begin{lemma}\label{lem:core}
Let $(\Gamma,h)$ be a finite connected metric graph with $b(\Gamma)\ge2$, and let $c>0$ be such that
$\length(\Gamma,h)\le c\,(b(\Gamma)-1)$. Then $\Gamma$ contains a connected subgraph $\Gamma_0$ such that
\begin{enumerate}
\item $b(\Gamma_0)\ge2$ and $\length(\Gamma_0)\le c\,(b(\Gamma_0)-1)$;
\item $\Gamma_0$ has no vertex of degree one;
\item every arc of $\Gamma_0$ (Definition~\ref{def:arc}) has length at most $c$.
\end{enumerate}
\end{lemma}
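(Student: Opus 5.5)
We argue by a minimal-counterexample (or descent) argument on the number of edges. Among all connected subgraphs $\Gamma'\subset\Gamma$ with $b(\Gamma')\ge2$ and $\length(\Gamma')\le c\,(b(\Gamma')-1)$, which is a nonempty family since it contains $\Gamma$ itself, we choose $\Gamma_0$ minimal for inclusion, or equivalently with the fewest edges. Condition (1) then holds by construction, and it remains to show that minimality forces (2) and (3).

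For (2), we prune degree-one vertices. Suppose $\Gamma_0$ has a vertex $x$ of degree one, and let $E$ be the edge incident to $x$. Removing $E$ together with $x$ leaves a connected subgraph $\Gamma'$. Since we removed one edge and one vertex, $b(\Gamma')=b(\Gamma_0)$, while the length strictly drops. So $\Gamma'$ lies in the family and is strictly smaller, which contradicts minimality. Hence $\Gamma_0$ has no vertex of degree one. Since $b(\Gamma_0)\ge2$, Definition~\ref{def:arc} now applies, so the arcs of $\Gamma_0$ are defined and $\Gamma_0$ has branch vertices.

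For (3), suppose some arc $\alpha$ of $\Gamma_0$ has length $\ell(\alpha)>c$. We delete the edges of $\alpha$ and its interior vertices, which all have degree two, and call the result $\Gamma'$. If $\alpha$ has $p$ edges, it has $p-1$ interior vertices. The removal therefore lowers $e-v$ by one, so if $\Gamma'$ is connected then $b(\Gamma')=b(\Gamma_0)-1$. In that case
\[
\length(\Gamma')=\length(\Gamma_0)-\ell(\alpha)< c\,(b(\Gamma_0)-1)-c=c\,(b(\Gamma')-1).
\]
If $b(\Gamma')\ge2$, this contradicts minimality. We must also check that $\Gamma'$ is connected. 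If it were not, $\alpha$ would be a bridge, and removing a bridge does not change $b$; the components $\Gamma_1,\Gamma_2$ would then satisfy $b(\Gamma_1)+b(\Gamma_2)=b(\Gamma_0)$ and have total length less than $c(b(\Gamma_0)-1)-c=c(b_1+b_2-2)$. So one of them, say $\Gamma_1$, satisfies $\length(\Gamma_1)<c\,(b(\Gamma_1)-1)$. This $\Gamma_1$ is a connected proper subgraph, and it remains to rule out $b(\Gamma_1)\le1$ for the component we pick.

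The main obstacle is exactly these low-Betti-number edge cases: $b(\Gamma')=1$ in the non-bridge case, and small components in the bridge case. Lengths are positive, so $\length(\Gamma_i)<c(b_i-1)$ is impossible when $b_i\le1$. Therefore in the bridge case the chosen component automatically has $b(\Gamma_1)\ge2$, which gives the contradiction. In the non-bridge case with $b(\Gamma')=1$, the same inequality would read $\length(\Gamma')<0$, which is absurd. So $b(\Gamma')\ge2$ holds automatically, and the contradiction with minimality goes through.

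Thus every arc of $\Gamma_0$ has length at most $c$, which is (3), and $\Gamma_0$ satisfies (1)–(3).
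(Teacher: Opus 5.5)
Your proof is correct and is essentially the paper's argument: prune degree-one vertices, delete an arc longer than $c$, and in the disconnecting case use $b(\Gamma_1)+b(\Gamma_2)=b(\Gamma_0)$ to find a component with $\length<c\,(b-1)$, which forces $b\ge2$. The only difference is packaging. You choose a subgraph with the fewest edges in the admissible family, whereas the paper inducts on $b$; this removes the need for the paper's separate base case $b=2$ and for its degree-two observation about the components. One small correction: inclusion-minimal and fewest edges are not equivalent, but either choice works, since every contradiction you derive exhibits a proper subgraph with fewer edges.
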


\begin{proof}
We argue by induction on $b=b(\Gamma)$. First, repeatedly delete vertices of degree one together
with their incident edges. This changes neither $b$ nor connectedness and does not increase the
length, so we may assume that $\Gamma$ has no vertex of degree one. If every arc of $\Gamma$ has
length at most $c$, take $\Gamma_0=\Gamma$.

If $b=2$, this is always the case. Indeed, $\Gamma$ then has at least two arcs, each of positive
length, so every arc is strictly shorter than $\length(\Gamma)\le c$. This settles the base case.

Let $b\ge3$ and suppose some arc $a$ has $\length(a)>c$. Remove the edges and the interior vertices
of $a$. The remaining graph has length $\length(\Gamma)-\length(a)<c(b-1)-c=c(b-2)$.

\emph{If $a$ does not disconnect $\Gamma$}, the remainder $\Gamma'$ is connected with
$b(\Gamma')=b-1\ge2$ and $\length(\Gamma')<c\,(b(\Gamma')-1)$. By induction, $\Gamma'$, and hence $\Gamma$,
contains the required subgraph.

\emph{If $a$ disconnects $\Gamma$}, the remainder is a disjoint union $\Gamma'\sqcup\Gamma''$ of two connected
graphs with $b(\Gamma')+b(\Gamma'')=b$. In each of $\Gamma',\Gamma''$, every vertex has degree at least $2$: the
endpoint of $a$ it contains had degree at least $3$, and all other degrees are unchanged. Hence both
contain a cycle, so $b(\Gamma'),b(\Gamma'')\ge1$, and both have positive length. Since
\[
\length(\Gamma')+\length(\Gamma'')\ <\ c\,(b(\Gamma')-1)+c\,(b(\Gamma'')-1),
\]
one of them, say $\Gamma'$, satisfies $\length(\Gamma')<c\,(b(\Gamma')-1)$. As its length is positive,
$b(\Gamma')\ge2$, and the induction hypothesis applied to $\Gamma'$ concludes the proof.
\end{proof}

\begin{remark}\label{rem:core-sharp}
The constant $c$ in Lemma~\ref{lem:core}(3) cannot be lowered in general. Let $\Gamma$ be the
\emph{rose} with $b$ petals, that is, one vertex with $b$ loops, each of length $c(b-1)/b$. Then
$\length(\Gamma)=c(b-1)$, and every subgraph satisfying (1) and (2) consists of the vertex and at least two
of the loops, whose arcs have length $c(b-1)/b$, which tends to $c$ as $b\to\infty$.
\end{remark}

\section{Proof of Theorem~\ref{thmA}}\label{sec:proofA}

Put $c:=3\lambda$, so that the hypothesis reads $\length(\Gamma,h)\le c\,(b-1)$. Let $\Gamma_0\subset\Gamma$ be
the subgraph given by Lemma~\ref{lem:core}, and let $T\subset\Gt$ be a connected component of the preimage
of $\Gamma_0$. By Lemma~\ref{lem:subtree}, $T$ is isometric to the universal cover of $\Gamma_0$, and its intrinsic
metric is the restriction of the metric of $\Gt$.

Smooth $\Gamma_0$ (Definition~\ref{def:arc}) and give $T$ the lifted graph structure. This changes neither the
metric space $\Gamma_0$ nor $T$. The vertices of $T$ are now the lifts of the branch vertices of $\Gamma_0$. They have
degree at least $3$ and are vertices of $\Gt$. The edges of $T$ are the lifts of the arcs of $\Gamma_0$, so they
have length at most $c$; their lengths take finitely many positive values and are therefore bounded below.
Theorem~\ref{thmB} and Lemma~\ref{lem:subtree} give, for \emph{every} lift $\tilde u$ of a branch vertex of
$\Gamma_0$ and every $R\ge0$,
\[
\length B_{\htil}(\tilde u,R)\ \ge\ \length B_T(\tilde u,R)\ \ge\ \cV_c(R)=\cV_{3\lambda}(R).
\]
The identity $\cV_{3\lambda}(R)=3\lambda\,\cV_1(R/3\lambda)$ is part of Definition~\ref{def:Vc}, and the lower
bound $9\lambda(2^{R/(3\lambda)}-1)$ is Lemma~\ref{lem:beta}(3). When $\lambda\le\frac13$,
Corollary~\ref{cor:monotone} gives $\cV_{3\lambda}\ge\cV_1$. \qed

\begin{remark}[Volume entropy]\label{rem:entropy}
Theorem~\ref{thmA} implies $\Ent(\Gamma,h)\ge\frac{\ln2}{3\lambda}$ with $\lambda=\frac{\length(\Gamma,h)}{3b-3}$, that
is, $\Ent(\Gamma,h)\cdot\length(\Gamma,h)\ge(b-1)\ln2$. This is weaker than \eqref{eq:KNL} by exactly the factor
$3$ discussed in Section~\ref{sec:sharp}. In the same way, Conjecture~\ref{conjD} would imply \eqref{eq:KNL}.
\end{remark}

\section{Application to surfaces: proof of Theorem~\ref{thmC}}\label{sec:surfaces}

We use the following notions from \cite{Kar15}. The \emph{systole} $\sys(M,h)$ of a closed Riemannian
surface is the length of a shortest non-contractible closed curve. The systole $\sys(\Gamma,h)$ of a metric
graph is the length of a shortest cycle. A graph $\Gamma\subset M$, with the metric induced by $h$,
\emph{captures the topology} of $M$ if the map $H_1(\Gamma;\R)\to H_1(M;\R)$ induced by the inclusion is onto.
Let $L(M,h)$ be the infimum of the lengths of graphs that capture the topology of $M$. A graph capturing
the topology of $M$ with length $L(M,h)$ is called \emph{minimal} \cite[Definition~5.5]{Kar15}. For $u\in M$
and $r>0$, let $B(u,r)$ be the closed ball of radius $r$ centred at $u$ in $(M,h)$, and let
$B^+(u,r)\supset B(u,r)$ be the set obtained by filling with $2$-cells the boundary components of $B(u,r)$
that are contractible in $M$ \cite[Definition~6.4]{Kar15}.

\subsection{Part (1)}
We follow the proof of \cite[Theorem~9.2]{Kar15} and only replace the input from graphs. Fix $R>0$.
Since surface groups are residually finite, there is a finite cover $\bar M\to M$ such that $\bar M$ is
orientable and $\sys(\bar M,\bar h)\ge\max\{2R,\frac12\}$, where $\bar h$ is the lift of $h$. Lifting $h$ and $hyp$
multiplies both areas by the degree of the cover, and $\bar M$ has the same universal cover as $M$. So we may
assume that $M$ is orientable of genus $g\ge2$ and that $\sys(M,h)\ge\max\{2R,\frac12\}$.

By the Gauss--Bonnet formula, $\area(M,hyp)=4\pi(g-1)$, so
$\area(M,h)\le\frac{1}{2^{13}\pi}\cdot4\pi(g-1)\le\frac{1}{2^{12}}(2g-1)$. Hence \cite[Proposition~8.1]{Kar15}
applies and provides a minimal graph $\Gamma\subset M$ with $\length(\Gamma,h)\le\frac12(2g-1)$. Since $\Gamma$ captures
the topology of $M$, we have $b(\Gamma)\ge2g$, and therefore
\begin{equation}\label{eq:shortgraph}
\length(\Gamma,h)\ \le\ \tfrac12\,(b(\Gamma)-1)\ =\ \tfrac16\,\length(\Gamma_{b(\Gamma)}).
\end{equation}
The proof of \cite[Theorem~9.2]{Kar15} establishes two further facts for every vertex $u$ of $\Gamma$ and
every $r\in(0,R)$. First, by minimality of $\Gamma$,
\[
\length\partial B^+(u,r)\ \ge\ \length\big(\Gamma\cap B^+(u,r)\big)\ \ge\ \length B_\Gamma(u,r).
\]
Second, $\sys(\Gamma,h)\ge\sys(M,h)>2r$, so the covering map restricts to a length-preserving bijection from
$B_{\Gt}(\tilde u,r)$ onto $B_\Gamma(u,r)$ for every lift $\tilde u$ of $u$.

By \eqref{eq:shortgraph}, Theorem~\ref{thmA} applies with $\lambda=\frac16$, so $3\lambda=\frac12$. It gives a vertex
$\tilde u$ of $\Gt$, the same for all radii, with $\length B_{\Gt}(\tilde u,r)\ge\cV_{1/2}(r)$ for every $r\ge0$.
Let $u$ be the image of $\tilde u$ in $\Gamma$. Since $\partial B^+(u,r)\subset\partial B(u,r)$, the coarea formula and
Lemma~\ref{lem:beta}(3) yield
\[
\begin{aligned}
\area B(u,R)\ &\ge\ \int_0^R\length\partial B^+(u,r)\,dr\ \ge\ \int_0^R\cV_{1/2}(r)\,dr\\
&\ge\ \int_0^R\tfrac32\,(4^r-1)\,dr=\tfrac32\Big(\frac{4^R-1}{\ln4}-R\Big).
\end{aligned}
\]
Since $R\le\frac12\sys(M,h)$, the ball $B(u,R)$ lifts isometrically to a ball of radius $R$ in $\Mt$. This proves the
first inequality of part~(1).

Write $G(R):=\frac32(\frac{4^R-1}{\ln4}-R)$ and $H(R):=V_{\Hyp^2}(R)=2\pi(\cosh R-1)$. Then
$G(3)>63.6>57.0>H(3)$. For $R\ge3$,
\[
G'(R)=\tfrac32(4^R-1)\ \ge\ \pi e^R\ \ge\ 2\pi\sinh R=H'(R).
\]
The middle inequality holds at $R=3$, where $94.5>63.2$, and it persists for $R\ge3$ because
$\frac{d}{dR}\log(4^R-1)>\ln4>1=\frac{d}{dR}\log(\pi e^R)$. Hence $G\ge H$ on $[3,\infty)$.

\subsection{Part (2)}
Let $\delta_0=\frac{1}{2^{13}\pi}$ and suppose $\area(M,h)\le\frac{\delta_0}{16}\area(M,hyp)$. Choose $\mu>0$ with
$\area(M,\mu^2h)=\delta_0\area(M,hyp)$, so $\mu\ge4$. Since $V_{(\Mt,\mu^2\htil)}(\mu R)=\mu^2V_{(\Mt,\htil)}(R)$, part~(1)
applied to $\mu^2h$ gives
\[
V_{(\Mt,\htil)}(R)\ \ge\ \mu^{-2}\,G(\mu R).
\]
For $x\ge3$ we have $G(x)\ge0.97\cdot4^x$. Indeed, $G(x)-0.97\cdot4^x\ge0.112\cdot4^x-1.083-1.5x$, which is
positive at $x=3$ and increasing. For $R\ge1$ and $\mu\ge4$ we have $\mu R\ge4$. The function
$\mu\mapsto\mu R\ln4-2\ln\mu$ is increasing for $\mu\ge4$, so
\[
\mu^{-2}G(\mu R)\ \ge\ \frac{0.97}{16}\,e^{4R\ln4}=\frac{0.97}{16}\,256^R\ \ge\ \pi e^R\ \ge\ H(R),
\]
where the middle inequality reduces to $(256/e)^R\ge16\pi/0.97$, which holds for $R\ge1$ since
$256/e>94>52>16\pi/0.97$. This proves part~(2). \qed

\section{Sharpness and the conjecture}\label{sec:sharp}

For $\lambda>0$, let $\Gamma_b^\lambda$ denote the graph $\Gamma_b$ with all edges of length $\lambda$.

\begin{proposition}\label{prop:optimal}
Conjecture~\ref{conjD}, if true, is optimal in two senses.
\begin{enumerate}
\item For every $\lambda>0$, the graph $\Gamma_b^\lambda$ satisfies the hypothesis of Conjecture~\ref{conjD}, and every
vertex $\tilde u$ of its universal cover satisfies $\length B(\tilde u,R)=\cV_\lambda(R)$ for every $R\ge0$. Hence
$\cV_\lambda$ cannot be replaced by $\cV_{\lambda'}$ with $\lambda'<\lambda$.
\item The threshold $\lambda\le1$, under which the conjecture gives $\length B(\tilde u,R)\ge\cV_1(R)$ for all
$R$, is optimal. For $\lambda\in(1,2)$, every vertex $\tilde u$ of the universal cover of $\Gamma_b^\lambda$ satisfies
$\length B(\tilde u,\lambda)=3\lambda<6\lambda-3=\cV_1(\lambda)$.
\end{enumerate}
\end{proposition}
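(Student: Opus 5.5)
Both parts reduce to the observation that the universal cover of $\Gamma_b^\lambda$ is $T_\lambda$, so everything follows from Definition~\ref{def:Vc}.

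\emph{Part (1).} First we check the hypothesis of Conjecture~\ref{conjD}. The graph $\Gamma_b^\lambda$ is connected with $b(\Gamma_b^\lambda)=b$, and it has $3b-3$ edges of length $\lambda$. Hence $\length(\Gamma_b^\lambda)=\lambda(3b-3)=\lambda\,\length(\Gamma_b)$, with equality.

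Next we identify the universal cover. Since $\Gamma_b^\lambda$ is trivalent with all edges of length $\lambda$, its universal cover is the trivalent tree with all edges of length $\lambda$, namely $T_\lambda$, and its vertices are vertices of $T_\lambda$. Vertex-transitivity of $T_\lambda$ and Definition~\ref{def:Vc} then give $\length B(\tilde u,R)=\cV_\lambda(R)$ for every vertex $\tilde u$ and every $R\ge0$.

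For the non-improvability, suppose $\lambda'<\lambda$. We need some $R$ with $\cV_{\lambda'}(R)>\cV_\lambda(R)$; this strict inequality is stronger than Corollary~\ref{cor:monotone}, which is only non-strict. Take $R\in(\lambda',\min(2\lambda',\lambda))$. The closed formula gives $\cV_\lambda(R)=3R$, since $R<\lambda$. It also gives
\[
\cV_{\lambda'}(R)=3\lambda'+6(R-\lambda')=6R-3\lambda',
\]
which exceeds $3R$ because $R>\lambda'$. So no vertex of the cover of $\Gamma_b^\lambda$ satisfies the bound with $\cV_{\lambda'}$ at this $R$, and $\cV_\lambda$ cannot be replaced by $\cV_{\lambda'}$.

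\emph{Part (2).} For $\lambda\in(1,2)$, every vertex $\tilde u$ is again a vertex of $T_\lambda$. At radius exactly $\lambda$ the ball consists of the three incident edges, so $\length B(\tilde u,\lambda)=\cV_\lambda(\lambda)=3\lambda$. On the other side, $\lfloor\lambda\rfloor=1$, and the formula gives
\[
\cV_1(\lambda)=3(2-1)+3(\lambda-1)\cdot2=6\lambda-3.
\]
Since $6\lambda-3>3\lambda$ exactly when $\lambda>1$, this shows that the bound $\length B(\tilde u,R)\ge\cV_1(R)$ fails for these graphs when $\lambda>1$. The threshold $\lambda\le1$ is therefore optimal.

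All steps are direct computations. The only points needing care are the strictness in part~(1), which the choice of $R$ above handles, and confirming that the vertices of the cover are vertices of $T_\lambda$. For the latter, note that $\Gamma_b$ may have loops or multiple edges, but its universal cover is still trivalent with edges of length $\lambda$.
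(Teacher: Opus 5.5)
Your proof is correct and follows the paper's argument. You identify the universal cover of $\Gamma_b^\lambda$ with $T_\lambda$, then compare $\cV_{\lambda'}$, $\cV_\lambda$ and $\cV_1$ using the closed formula of Definition~\ref{def:Vc}; the only cosmetic difference is that you restrict to $R\in(\lambda',\min(2\lambda',\lambda))$ to get the explicit value $6R-3\lambda'$, whereas the paper takes any $R\in(\lambda',\lambda]$ and uses directly that $\cV_{\lambda'}(R)>3R$ once $R>\lambda'$.
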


\begin{proof}
The universal cover of $\Gamma_b^\lambda$ is $T_\lambda$, which gives the equality in (1). If $\lambda'<\lambda$ and
$R\in(\lambda',\lambda]$, then $\cV_{\lambda'}(R)>3R=\cV_\lambda(R)$ by Definition~\ref{def:Vc}. Part (2) follows from the
same formula: $\cV_\lambda(\lambda)=3\lambda$ and $\cV_1(\lambda)=3+6(\lambda-1)$ for $\lambda\in[1,2]$.
\end{proof}

\noindent\textbf{Where the factor $3$ comes from.}
Both ingredients of Theorem~\ref{thmA} are sharp: Theorem~\ref{thmB} for $T=T_c$, and Lemma~\ref{lem:core}
by Remark~\ref{rem:core-sharp}. Their extremal configurations are incompatible, however. Let $\Gamma_0$ be a
subgraph as in Lemma~\ref{lem:core}, and put $b_0=b(\Gamma_0)$. If all branch vertices of $\Gamma_0$ were trivalent and
all its arcs had length close to $c$, then $\Gamma_0$ would have $3b_0-3$ arcs and length close to $3c(b_0-1)$. This is
three times the bound $c(b_0-1)$ guaranteed by Lemma~\ref{lem:core}(1). Conversely, the rose of
Remark~\ref{rem:core-sharp} has a branch vertex of very large degree and therefore very large balls. A proof of
Conjecture~\ref{conjD} along these lines would need a comparison principle for trees whose edges are short only
\emph{on average}, weighted by degrees as in Lim's formula for the entropy-minimising metric \cite{Lim08}.

\begin{example}[Equality for non-equilateral theta graphs]\label{ex:theta}
Let $\Theta$ be the theta graph whose three edges have lengths $x\le y\le z$, where $x+y+z=3=\length(\Gamma_2)$ and
$z\le\frac32$. Then $z\ge1$ and $x+y\ge z$. The universal cover of $\Theta$ is a trivalent tree in which each vertex
meets one edge of each length, and all its vertices are equivalent under isometries. Fix a vertex $\tilde u$
and, for $r>0$, let $N(r)$ be the number of points at distance exactly $r$ from $\tilde u$. Geodesics issuing
from $\tilde u$ follow edge paths that never traverse an edge immediately followed by the same edge in the
opposite direction. Such paths reach a second vertex after length $x$, $y$ or $z$, and a third vertex after
length at least $x+y\ge z$. So $N=3,4,5$ on $(0,x),(x,y),(y,z)$ respectively, and
\[
\length B(\tilde u,z)=\int_0^zN(r)\,dr=3x+4(y-x)+5(z-y)=6z-3=\cV_1(z).
\]
Moreover $\length B(\tilde u,R)\ge\cV_1(R)$ on $[0,z]$. On $[0,1]$ this holds because $N\ge3$ and
$\cV_1(R)=3R$ there. On $[1,z]$ it holds because $\cV_1$ has slope $6$ there while $R\mapsto\length B(\tilde u,R)$
has slope at most $5$, and the two agree at $z$. Thus Conjecture~\ref{conjD} for $\lambda=1$ is attained with
equality at a positive radius by a two-parameter family of non-equilateral graphs. In particular, no
strengthening of the form $\length B\ge(1+\varepsilon)\cV_1$ is possible at all radii.
\end{example}


\end{document}